\newcommand{\RR}{{\mathbb{R}}}
\theoremstyle{plain}
\newtheorem{theorem}{Theorem}[section]
\newtheorem{lemma}[theorem]{Lemma}
\newtheorem{proposition}[theorem]{Proposition}
\newtheorem*{namedtheorem}{\theoremname}
\newcommand{\theoremname}{testing}
\theoremstyle{definition}
\author{Alexander Coward}
\author{Joel Hass}
\begin{document}

\title[Topological and physical link theory are distinct]
{Topological and physical link theory are distinct}

\maketitle
\vspace{-1cm}
\begin{abstract}
Physical knots and links are one-dimensional submanifolds of $\RR^3$ with fixed length and thickness.  We show that isotopy classes in this category can differ from those of classical knot and link theory. In particular we exhibit a Gordian Split Link, a two component link that is split in the classical theory but cannot be split with a physical isotopy.
\end{abstract}

\section{Introduction}

The theory of knots and links studies one-dimensional submanifolds of $\RR^3$. These are often described as loops of string, or rope, with their ends glued together.  Real ropes  however are not one-dimensional, but have a positive thickness and a finite length. Indeed, most physical applications of knot theory are related more closely to the theory of  knots of fixed thickness and length than to classical knot theory. For example, biologists are interested in curves of fixed thickness and length as a model for DNA and protein molecules. In these applications the thickness of the curve modeling the molecule  plays an essential role in determining the possible configurations. 

In this paper we show that  the equivalence class of a link in $\RR^3$ under an isotopy that preserves thickness and length can be distinct from the classical equivalence class under isotopy. We thus show for the first time that the theory of  physically realistic curves of fixed thickness and length is distinct from the classical theory of knots and links.

The two most fundamental problems concerning physical knots and links are to show the existence of a {\em Gordian Unknot} and a  {\em Gordian Split Link}.  A Gordian Unknot is a loop of fixed thickness and length whose core is unknotted, but which cannot be deformed to a round circle by an isotopy fixing its length and thickness.  A Gordian Split Link is a pair of loops of fixed thickness whose core curves can be split, or isotoped so that its two components are separated by a plane, but cannot be split by an isotopy fixing each component's length and thickness.
In this paper we establish the existence of  such a link.

\begin{theorem}\label{maintheorem}
A Gordian Split Link exists.
\end{theorem}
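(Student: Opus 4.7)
The plan is to exhibit an explicit two-component link $L = K_1 \cup K_2$ in $\RR^3$, with each component assigned a specific length and a common thickness radius $r>0$, and to establish two complementary statements: (a) the cores of $K_1$ and $K_2$ form a classically split link, and (b) no physical isotopy (preserving length and thickness of each component) can separate them by a plane.

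For the construction, I would choose a \emph{clasp with cancelling windings}. Take $K_2$ to be a long, thin stadium-shaped loop, and build $K_1$ so that one arc of it wraps tightly around one arm of $K_2$ for $n$ turns in the positive sense, then traverses a short bridge to wrap $n$ turns in the negative sense around the same arm, and finally closes. The algebraic linking number is zero, and in the classical theory one can isotope $K_1$ off $K_2$ by sliding the positive windings along the arm to meet and annihilate the negative ones; this produces an explicit splitting isotopy. Crucially, the winding radii are only slightly larger than $2r$, so that each loop of $K_1$ hugs $K_2$ as tightly as the thickness permits, and the total length of $K_1$ is allocated almost entirely to these windings. During the classical cancellation, strands of $K_1$ pass through one another, which is permitted topologically but forbidden physically.

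The hard step is (b). My approach is to find a geometric obstruction that is robust under any thickness-preserving isotopy. One natural route is to track a putative spanning disk $D$ for $K_1$ disjoint from $K_2$: because $K_1$ wraps $K_2$ many times before unwrapping, any such disk must either carry a lot of area in the complement of the thick tube around $K_2$, or else contain many parallel sheets separated by distance at least $2r$ in a bounded region. A length-budget argument should then show that the length of $K_1$ needed to realize a splitting configuration is strictly greater than the prescribed length of $K_1$. An alternative approach is to argue by contradiction from a continuous family of isotopies: some discrete invariant (for instance, the minimum number of essential intersections of $K_1$ with a transverse disk spanning $K_2$ near its waist) must change during a splitting, and each change forces two strands of $K_1$ to come within distance less than $2r$ of one another, contradicting thickness.

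The main obstacle I anticipate is making this obstruction quantitative and rigorous rather than heuristic. Linking number alone is useless since it vanishes; the needed invariant is a second-order feature sensitive to the thick-tube embedding. The real work will be choosing $n$, $r$, and the component lengths precisely so that (i) the classical splitting still exists, while (ii) an isoperimetric-type inequality, comparing total rope length to the geometry of any separating configuration, is violated. Once such a quantitative inequality is in hand, Theorem~\ref{maintheorem} follows: the link is split classically yet Gordian in the physical category.
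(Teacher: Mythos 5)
Your proposal takes a genuinely different route from the paper, but it has a serious gap: the obstruction to physical splitting, which you rightly identify as ``the hard step,'' is left entirely at the level of heuristics, and you acknowledge this yourself. The paper's argument hinges on two concrete tools that do not appear in your sketch. First, the unknotted component $L_1$ is spanned, at each moment of the isotopy, by the \emph{cone} over $L_1$ from its center of mass; this cone is flat away from a single cone point of angle at least $2\pi$, hence a $\mathrm{CAT}(0)$ disk, and an isoperimetric inequality for $\mathrm{CAT}(0)$ surfaces (Proposition~\ref{iso}) shows that if $L_1$ is shorter than $4\pi + 6$ then this disk can meet the unit tube around the other component in at most two essential pieces at any time. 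Second, the other component $L_2$ is chosen to be \emph{knotted}, and a sweepout lemma (Proposition~\ref{2dotted}, a generalization of the fact that a $1$-bridge knot is trivial) shows that if a generic symmetric family of disks spanning $L_1$ always meets the tube around $L_2$ in at most two essential pieces, then $L_2$ is forced to be unknotted --- contradiction. Your candidate invariants (``area in the complement of the thick tube,'' ``parallel sheets separated by distance $2r$,'' ``minimum number of essential intersections with a waist disk'') are not developed into anything one could actually estimate.

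More troubling is the choice of example. Your ``clasp with cancelling windings'' has linking number zero, and your topological splitting move slides the $+n$ windings into the $-n$ windings. But that same sliding motion is a plausible \emph{physical} isotopy: an adjacent pair of opposite-sign windings forms a hairpin that a real rope of fixed thickness can simply pull straight, without any strand passage, and a stadium-shaped unknotted $K_2$ offers no evident obstruction to doing this repeatedly. So it is far from clear that your link is Gordian at all. The paper's example is built on the opposite structural principle: make $L_2$ a nontrivial knot passing twice through the span of $L_1$, and obtain the physical obstruction not from tight windings but from the impossibility of $L_2$ being unknotted. Without that idea (or a rigorous substitute) I do not see how to close your step (b).
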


The proof of Theorem \ref{maintheorem} is by a construction of a link, illustrated in Figure \ref{link}, that can be topologically but not physically split. 

\begin{figure}[h!]
\centering
\includegraphics[width=.23\textwidth]{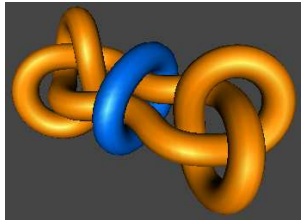}
\caption{A Gordian Split Link.} \label{link}
\end{figure}  

There has been extensive investigation, much of it experimental, into the properties of shortest representatives of physical knots, called {\em ideal knots}, and into the possible existence of Gordian Unknots \cite{BS,CKS,CKS2,DEJ,GM, P2}.  A candidate for a  Gordian Unknot appeared in work of  Freedman, He and Wang \cite{FHW}, who studied energies associated to curves in $\RR^3$ and associated gradient flows \cite{FHW}. This curve was studied numerically by Piera{\'n}ski  \cite{P}, who developed a computer program called SONO (Shrink On No Overlaps) to numerically shorten a curve of fixed thickness while avoiding overlaps. The program unexpectedly succeeded in unraveling the Freedman-He-Wang example. However there are more complicated examples that do fail to unravel under SONO, and hence give numerical evidence for  the existence of Gordian Unknots.  A proof of the existence of Gordian Unknots or Gordian Split Links based on a rigorous analysis of such algorithms is plausible, but has not yet been found.
  
We now give precise definitions. 
We say that a knot or link $L$ in $\RR^3$ is \emph{$r$-thick} if it is differentiable  and its open radius-$r$ normal disk bundle is embedded. This means that the collection of flat, radius-$r$ two-disks intersecting  $L$ perpendicularly at their centers have mutually disjoint interiors. An isotopy of a knot or link maintaining $r$-thickness throughout is called an \emph{$r$-thick isotopy}. By rescaling we may take  $r=1$, and take \emph{thick} to mean 1-thick.
A \emph{physical isotopy} is a thick isotopy of a knot or link that preserves the length of each component. 

Theorem \ref{maintheorem} is proved by an explicit construction of a two-component thick link $L$ that is split but admits no physical isotopy splitting its components. To construct this link  we begin by placing two points $A$ and $B$ at $(1,0,0)$ and $(-1,0,0)$.  Let $AB$ denote the straight line between these two points. The first component $L_1$ of $L$ is any thick curve encircling $AB$ in the plane $z = 0$, disjoint from the open, radius 2 neighborhood of $AB$. The length of  $L_1$ is at least  $4\pi + 4 \approx 16.566$, and this length can be realized by taking $L_1$ to be the boundary of the radius 2 neighborhood of $AB$ in the plane $z = 0$. To construct the other component, join the two points $A$ and $B$ by an arc $\alpha$ satisfying the following three conditions:
\begin{enumerate}
\item The  union of $\alpha$ with $AB$ forms a non-trivial knot contained in the half-space $z \ge 0$. 
\item  The arc $\alpha$ meets the plane $z = 0$ only at its endpoints and is perpendicular to the plane $z = 0$ at these points.
\item The union of $L_1$, $\alpha$ and the reflection  of $\alpha$ across the plane $z = 0$ forms a thick link. 
\end{enumerate}
The union of $\alpha$ and its reflection in the plane $z = 0$ is the second component $L_2$ of the thick link $L$. Figure \ref{link} shows an example of such a link.  

Theorem \ref{maintheorem} follows immediately from the following result, which gives an explicit lower bound on the length required for $L_1$, the unknotted component of $L$, if $L$ can be split by a physical isotopy.

\begin{theorem}\label{mainspecific}
If there is a physical isotopy of $L = L_1 \sqcup L_2$ that splits its two components, then the length of $L_1$ must be at least 
 $4\pi +  6 \approx 18.566$.
\end{theorem}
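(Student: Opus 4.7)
The plan is to identify a single critical moment $t^*$ during any hypothetical splitting isotopy at which $L_1(t^*)$ is geometrically forced to enclose a region of a specific minimum size; since physical isotopy preserves length, the resulting lower bound propagates to all $t$. I would track the two points $A, B \in L_2$ where $L_2(0)$ meets $\{z = 0\}$, writing $A(t), B(t) \in L_2(t)$ for their positions along the isotopy and $\alpha(t) \subset L_2(t)$ for the tracked ``upper'' arc between them, so that $\alpha(0) \cup \overline{AB}$ is the non-trivial knot $K := \alpha \cup AB$. At $t = 1$ there is a spanning disk $D_1$ of $L_1(1)$ disjoint from $L_2(1)$, so $\alpha(1)$ can be closed off through $D_1$ to an unknotted loop.

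The main topological step is to identify a critical time $t^*$ at which $\alpha(t^*)$ meets a spanning disk $D^*$ of $L_1(t^*)$ in exactly the two points $A(t^*), B(t^*)$, with closing $\alpha(t^*)$ by a chord in $D^*$ still realizing $K$. I would produce $t^*$ by defining an invariant of the pair $(L_1(t), \alpha(t))$, essentially the isotopy class of $\alpha(t)$ closed through a spanning disk of $L_1(t)$ considered modulo ambient isotopy rel.\ $L_2(t)$, and arguing that this invariant equals $[K]$ at $t = 0$ but is trivial at $t = 1$. Continuity then forces a transition at some $t^*$, and the non-triviality of $K$ prevents the transition from occurring ``for free'': it must witness $\alpha(t^*)$ actually threaded across a spanning disk of $L_1(t^*)$.

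Given $t^*$, I extract the length bound from planar constraints on $D^*$. Because $L_2(t^*)$ has thickness $1$, the two piercing points $A(t^*), B(t^*) \in D^*$ lie at distance at least $2$ and $D^*$ contains disjoint closed disks of radius $1$ centered at them; because $L_1(t^*)$ is itself thick of radius $1$, these disks must lie at distance at least $1$ from $\partial D^* = L_1(t^*)$. The requirement that $\alpha(t^*)$ closed by a chord in $D^*$ realizes the non-trivial knot $K$, combined with $L_2(t^*)$'s thickness, then forces an additional unit of straight-line clearance inside $D^*$ between the piercing points. Together these constraints show $L_1(t^*)$ bounds a region whose boundary has length at least $4\pi + 6$, the extremal curve being the stadium with semicircular caps of radius $2$ and straight sides of length $3$.

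The main obstacle is the middle step: producing $t^*$ and proving the persistence of the knot-type obstruction. Spanning disks of $L_1(t)$ are flexible and can be deformed around $L_2(t)$, so one cannot simply track a single disk throughout. The invariant above must be shown to be well-defined (independent of the choice of spanning disk within a time slice) and to vary continuously in $t$; I expect this to require a careful analysis of how spanning disks can be pushed past $L_2(t)$ and how such pushes interact with the thickness constraint. Making the ``additional unit of clearance'' claim quantitative, as the bridge between the topological obstruction and the planar geometry, is the other delicate point on which the sharpening from $4\pi + 4$ to $4\pi + 6$ ultimately rests.
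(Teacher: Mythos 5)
Your proposal has a genuine gap at the crucial step, and it lies precisely where you locate the difficulty: the passage from a critical time with \emph{two} piercing points to the $4\pi + 6$ bound. The claim that knottedness of the closed-up arc ``forces an additional unit of straight-line clearance inside $D^*$ between the piercing points'' is not justified and is, I believe, false as stated: two unit-thick piercing points at distance $2$, each at distance $\geq 1$ from $\partial D^*$, are exactly compatible with a stadium boundary of length $4\pi + 4$, which is the configuration the split link actually realizes. The knot type of $L_2$ does not by itself push the piercing points farther apart or lengthen a chord between them, and you give no mechanism by which it would. The paper derives the extra $+2$ from something different: at some time during the isotopy the spanning disk must meet the tube $T(L_2)$ in at least \emph{three} components containing points of $L_2$, and a CAT(0) isoperimetric inequality (Proposition~\ref{iso}) for a curve enclosing three disjoint unit disks with unit clearance from the boundary gives $4\pi + 6$. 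The ``three'' is forced by Proposition~\ref{2dotted}, a sweepout argument generalizing the fact that $1$-bridge knots are unknotted: if every disk in a symmetric family sweeping from the $xy$-plane to a disk disjoint from $L_2$ met $T(L_2)$ in at most two dotted components, one could assemble from subarcs of these disks a spanning disk for $L_2$ and conclude, via the Loop Theorem, that $L_2$ is unknotted — contradicting its construction.

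The proposal's other main gap is the invariant itself. As you acknowledge, ``the isotopy class of $\alpha(t)$ closed through a spanning disk of $L_1(t)$'' is not well-defined: it depends on the choice of spanning disk and of the closing chord, and spanning disks can be pushed freely around $L_2(t)$. The paper does not attempt to make this canonical or to track an arc of $L_2$ at all; the sweepout formulation replaces the pointwise-in-time invariant with a statement about the whole one-parameter family, which is what makes the topological obstruction rigorous. One further issue: an arbitrary spanning disk $D^*$ has no intrinsic metric on which to run a planar isoperimetric argument. The paper works specifically with the cone over $L_1(s)$ from its centroid, whose cone angle is $\geq 2\pi$ and which is therefore CAT(0); this is essential to Proposition~\ref{iso}, and your setup does not supply it.
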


Since the link $L$ can be constructed with the length of  the unknotted component $L_1$ equal to   $4\pi + 4$,
this result implies Theorem \ref{maintheorem}.

The paper is arranged as follows. In Section 2 we give a lower bound on the boundary length of a non-positively curved  disk containing three disjoint disks of radius one.
In Section 3 we show that  if a family of disks spanning $L_1$ gives a homotopy from a disk in the $xy$-plane to a disk disjoint from $L_2$ and each disk in the homotopy intersects a neighborhood of $L_2$ in at most two components containing points of $L_2$, then $L_2$ is unknotted. In Section 4 we bring these results together to prove Theorem \ref{mainspecific}.

\section{An Isoperimetric Inequality}

To prove Theorem \ref{mainspecific}, we  show that if the unknotted component $L_1$ has length less than $4\pi +  6 $, then there are severe restrictions on how the other component can pass  through a natural spanning disc for $L_1$. This spanning disc, to be defined in Section \ref{bigproof}, is a cone with cone angle at least $2\pi$, and hence a $\textrm{CAT}(0)$ space. We are therefore led to finding a lower bound on the length of a curve in a $\textrm{CAT}(0)$ surface that bounds a disk enclosing three or more non-overlapping subdisks of radius one, as in Figure~\ref{threedots}.  

\begin{figure}[htbp] 
\centering
\psfrag{a}[][]{$f$}
\psfrag{b}[][]{$a$}
\psfrag{c}[][]{$b$}
\psfrag{d}[][]{$c$}
\psfrag{e}[][]{$d$}
\psfrag{f}[][]{$e$}
\psfrag{g}[][]{$c_1$}
\psfrag{h}[][]{$c_2$}
\psfrag{i}[][]{$c_3$}
\psfrag{x}[][]{$D$}
\psfrag{y}[][]{$\partial D$}
\includegraphics[width=2in]{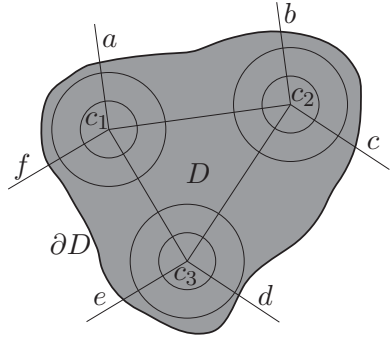} 
\caption{The boundary of this disk  has length at least $6 + 4\pi$.}
\label{threedots}
\end{figure}

\begin{proposition}\label{iso}
Let $P$ be a complete $CAT(0)$ surface and let  $D_1, D_2, D_3$ be three radius one subdisks of $P$ with disjoint interiors. Let $D \subset P$ be a disk containing $D_1, D_2, D_3$ such that
 $\partial D$ has distance at least 1 to any of  $D_1, D_2, D_3$.  Then the length of $\partial D$ is at least $4\pi +  6 $.
\end{proposition}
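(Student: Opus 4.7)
The plan is to transfer the problem to the Euclidean plane via Reshetnyak's majorization theorem, and then reduce to an elementary computation in Euclidean convex geometry.

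First, I would apply Reshetnyak majorization to the closed rectifiable curve $\partial D$ in the CAT(0) surface $P$. This produces a compact convex region $\bar R \subset \RR^2$ with $|\partial \bar R| = |\partial D|$, together with a $1$-Lipschitz map $f\co \bar R \to P$ whose restriction to $\partial \bar R$ is an arc-length-preserving homeomorphism onto $\partial D$. Since $P$ is homeomorphic to $\RR^2$ (being a complete, simply connected CAT(0) surface), the Jordan curve theorem applies. A winding-number argument then shows that $f(\bar R) \supset D$: any $y \in \mathrm{int}(D)$ has winding number $\pm 1$ around $f|_{\partial \bar R}$, so $y$ cannot be avoided by the disc filling $f$. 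In particular, each center $c_i$ of $D_i$ has a preimage $x_i \in \bar R$.

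Next, I would transfer the metric constraints to $\RR^2$ using the fact that $f$ is $1$-Lipschitz. Pairwise Euclidean distances satisfy $|x_i - x_j| \ge d(c_i, c_j) \ge 2$. Any $z$ with $|z - x_i| \le 1$ satisfies $d(f(z), c_i) \le 1$, so $f(z) \in D_i$; hence the Euclidean disk $B(x_i,1) \subset f^{-1}(D_i)$. For $w \in \partial \bar R$ and $z \in B(x_i, 1)$, $|w - z| \ge d(f(w), f(z)) \ge d(\partial D, D_i) \ge 1$, so $\partial \bar R$ has Euclidean distance $\ge 1$ from each $B(x_i, 1)$, and therefore distance $\ge 2$ from each $x_i$. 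A connectedness argument (each $B(x_i, 2)$ is connected, contains $x_i \in \mathrm{int}(\bar R)$, and avoids $\partial \bar R$) gives $B(x_i, 2) \subset \bar R$ for each $i$.

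Finally, since $\bar R$ is convex, it contains the Euclidean convex hull $K$ of $B(x_1, 2) \cup B(x_2, 2) \cup B(x_3, 2)$. A direct computation, using that the total turning around any convex planar curve is $2\pi$ and that a common outer tangent between two equal-radius circles has length equal to the distance between their centers, yields
\[
 |\partial K| \;=\; 4\pi + |x_1 - x_2| + |x_2 - x_3| + |x_1 - x_3| \;\ge\; 4\pi + 6.
\]
Monotonicity of perimeter under inclusion of nested convex planar bodies then gives $|\partial D| = |\partial \bar R| \ge |\partial K| \ge 4\pi + 6$. The main obstacle is the first step: confirming via the degree/winding-number argument that the Reshetnyak majorizing disk actually covers each subdisk $D_i$ rather than filling in $\partial D$ on the ``wrong side''. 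Once this topological point is in hand, the remainder is a routine Euclidean convex-hull calculation.
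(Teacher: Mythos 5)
Your proof is correct, and it takes a genuinely different route from the paper's. The paper argues directly in the $\mathrm{CAT}(0)$ surface: it forms the geodesic triangle $T$ on the three centers, drops perpendicular rays from the sides of $T$ at its vertices, and splits $\partial D$ into three ``side'' parts (each at least as long as the triangle edge they face, giving at least $6$) and three ``corner'' parts (which sweep a total angle of at least $2\pi$ because the interior angles of $T$ sum to at most $\pi$, and whose radius-decreasing radial projections onto radius-$2$ circles are length-decreasing, giving at least $4\pi$). You instead transfer the entire problem to $\RR^2$ via Reshetnyak majorization: the $1$-Lipschitz filling $f\co\bar R\to P$ forces the three centers to have preimages $x_i$ in $\bar R$ with pairwise distances at least $2$ and distance at least $2$ from $\partial\bar R$, so the convex region $\bar R$ contains the convex hull $K$ of three Euclidean radius-$2$ disks, and $|\partial\bar R|\ge|\partial K|=4\pi+\mathrm{perim}(x_1x_2x_3)\ge 4\pi+6$ by monotonicity of perimeter under inclusion of convex bodies. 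Both give the sharp constant. Your approach outsources the comparison geometry to one black-box theorem and makes the remaining computation purely Euclidean and very transparent; the paper's approach is self-contained and sidesteps the one topological point your argument must address, namely that $f(\bar R)\supset D$ (so that $\bar R$ is non-degenerate and the $x_i$ exist). Your winding-number argument for that point is sound, since a complete $\mathrm{CAT}(0)$ surface is homeomorphic to $\RR^2$. One small step worth stating explicitly: to get $|w-x_i|\ge 2$ for $w\in\partial\bar R$, you use $d(\partial D,D_i)\ge 1$ together with $D_i=\bar B(c_i,1)$ to conclude $d(\partial D,c_i)\ge 2$, which follows by considering the point at distance $1$ from $c_i$ on the geodesic from $f(w)$ to $c_i$.
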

 
\begin{proof}
Let $T$ be the triangle with vertices at the center points  $c_1, c_2, c_3$ of the disks $D_1, D_2, D_3$.
Each edge of $T$ has length at least 2.
Let $a,b,c,d,e,f$ denote perpendicular rays from the sides of $T$ at its vertices, as in Figure~\ref{threedots}.
The curve  $\partial D$ intersects each of $a,b,c,d,e,f$ in at least one point.  We pick one such point for each, ordered cyclically around $\partial D$,
and refer to the intervening arcs  of $\partial D$ as the parts of $\partial D$ between them.
Since the edge of $T$  between $c_1$ and $c_2$ is perpendicular to $a$ and to $b$, it realizes the minimal distance of any path between them.
Thus the length of the part of $\partial D$ between $a$ and $b$ is at least 2. We can argue similarly for the  length of $\partial D$ between
$c$ and $d$, and between $e$ and $f$. Thus these three parts of $\partial D$  have total length at least  6.

The sum of the interior angles of $T$ is at most $\pi$, so the sum of the three angles between $f$ and $a$, between $b$ and $c$ and between $d$ and $e$ is at least $2\pi$.
Radial projection projects the remaining parts of $\partial D$ onto three circular arcs with total angle at least  $2\pi$. In a CAT(0) space, radius decreasing radial projection onto a circle of constant radius is length decreasing.  Since a circle of radius 2 has length at least  $4\pi$, it follows that the length of   $\partial D$ is at least  $6+ 4\pi$. 
\end{proof}
 
 {\bf Remark.}
A similar argument  shows that a curve enclosing two disks has length at least $4\pi +  4 $, and that the length of a curve enclosing $n>3$ disks is at least $4\pi +  2n $ if the centers of the disks form the vertices of a convex polygon. An argument for flat metrics was given in \cite{CKS2}.

\section{Sweepouts of Solid Tori}
 \label{sweep}
 
The following proposition gives a generalization of the fact that a 1-bridge knot is unknotted.
It  considers a generic 1-parameter family of disks, possibly singular, whose interiors sweep across a region containing a solid torus and concludes that if each disk meets the solid torus in at most two  components that cross its core, then the solid torus is unknotted.

\begin{proposition}\label{2dotted}
Let $T$ be a solid torus in $\RR^3$ with core $c$, such that both $T$ and $c$ are symmetric under reflection $r$ in the $xy$-plane. Suppose there is a homotopy of the disc $g_t:D  \rightarrow \RR^3 $, $t\in[-1,1]$, with the following properties:
\begin{enumerate}
\item  The curve $g_t(\partial D)$ is disjoint from $T$ for all  $t\in[-1,1]$.
\item The family of disks $g_t(D)$ is symmetric under reflection $r$ in the $xy$-plane; i.e. $g_0 (D)$ is contained in the $xy$-plane and $g_{-t} = r \circ g_{t}$.
\item The pre-image $g_0^{-1}(T)$ has two components, each containing a single point of $g_0^{-1}(c)$.
\item The disk $g_1(D)$ is disjoint from $c$.
\item For all $t \in[-1,1]$ the pre-image $g_t^{-1}(T) \subset  D$ has at most two components that contain a point of $g_t^{-1}(c)$.
\item The  map $g_t $  is generic with respect to the pair $(T,c)$.
\end{enumerate}
Then $c$ is unknotted.
\end{proposition}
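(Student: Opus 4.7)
The plan is to extract from the symmetric homotopy a circle in $D\times[-1,1]$ that maps by $G(x,t) := g_t(x)$ homeomorphically onto $c$, cap it by a (possibly singular) disk in this ambient $3$-ball, push this disk forward to a singular spanning disk for $c$ in $\RR^3$, and then invoke Dehn's Lemma to obtain an embedded spanning disk and conclude that $c$ is unknotted.

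To begin, I would analyze $N := G^{-1}(c)$. By the genericity hypothesis~(6), $N$ is a smoothly embedded $1$-submanifold of $D\times[-1,1]$. Conditions~(1) and~(4), together with the $r$-symmetry, imply $N$ is disjoint from $\partial(D\times[-1,1])$, so $N$ is a disjoint union of circles. By condition~(3), the two points $q_1,q_2$ of $g_0^{-1}(c)$ lie in $N\cap(D\times\{0\})$, and $D\times\{0\}$ is precisely the fixed set of the involution $\sigma(x,t) := (x,-t)$. Since $G\circ\sigma = r\circ G$ by condition~(2), we have $\sigma(N) = N$. A nontrivial involution of a circle has exactly $0$ or $2$ fixed points, and the only fixed points of $\sigma$ on $N$ are $q_1,q_2$, so the component $C$ of $N$ through $q_1$ must be $\sigma$-invariant and also pass through $q_2$.

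The map $G|_C \colon C\to c$ is then $(\sigma,r)$-equivariant and sends the two fixed points $q_1,q_2$ of $\sigma$ to the two fixed points $p,p'$ of $r$ on $c$. A standard lift computation shows its degree is odd, and I would use condition~(5) to rule out degrees other than $\pm 1$, since any higher odd wrapping of $C$ around $c$ would force a time slice at which $g_t^{-1}(T)$ has at least three essential components. Hence $G|_C$ is a homeomorphism onto $c$. Since $D\times[-1,1]$ is a $3$-ball, $C$ bounds a (singular) disk $\Sigma$ in it; a general-position argument, again using condition~(5) to make the linking of $C$ with each remaining circle component of $N$ vanish, lets me choose $\Sigma$ with interior disjoint from $N$. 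Then $G\circ\Sigma \colon D^2\to\RR^3$ is a singular disk whose boundary embeds as $c$ and whose preimage of $c$ is exactly $\partial D^2$, so Dehn's Lemma gives an embedded spanning disk for $c$.

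The main obstacle is the two uses of condition~(5): translating the local bound ``at most two essential components at each time slice'' into the global statements that $G|_C$ has degree $\pm 1$, and that $C$ is unlinked from the other components of $N$ in $D\times[-1,1]$. Both reductions call for a careful Morse-theoretic analysis of the projection $t \colon M = G^{-1}(T)\to[-1,1]$, tracking how essential components merge, split, appear, and disappear across generic tangency events. Once this slice-to-global translation is carried out, the singular-disk construction and Dehn's Lemma together finish the proof.
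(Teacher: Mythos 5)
The plan takes a genuinely different route from the paper. The paper never passes to the total space $D\times[-1,1]$: it constructs the spanning disk slice by slice, as a union of arcs $g_t(\alpha_t)$ joined by controlled jumps (the $g$-equivalences of Lemmas~\ref{spin} and~\ref{fixends}), doubles by the reflection $r$, and then applies the Loop Theorem after only verifying that $\partial E$ wraps an \emph{odd} number of times around $c$. Your picture --- the $\sigma$-invariant circle $C\subset N=G^{-1}(c)$ through $q_1,q_2$ --- is clean and correct as far as it goes: $N$ is a closed $1$-manifold in the interior of the ball, $\sigma(N)=N$, and a nontrivial involution of a circle has exactly two fixed points, forcing $C$ to contain both $q_1$ and $q_2$.

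The gap is in the two places where you invoke condition~(5), and you are right to flag them as the main obstacle; but I do not think either reduction goes through as stated. First, condition~(5) bounds the number of \emph{dotted components} of $g_t^{-1}(T)$, not the number of dots, so a single dotted component may carry many points of $C\cap(D\times\{t\})$; nothing in (5) prevents $C$ from winding around $c$ three or more times while the dots stay bunched in two components. Your equivariance computation gives at best odd degree, and even that requires $G(q_1)\ne G(q_2)$, which the hypotheses do not assert. If you only get odd nonzero degree you must replace Dehn's Lemma by the Loop Theorem (push $\partial(G\circ\Sigma)$ to $\partial T$, observe the resulting curve is essential in $H_1(\partial T)$ because its longitudinal coordinate is odd, and compress). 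This is exactly what the paper does, so aiming for degree $\pm1$ is both unnecessary and probably unachievable from (5). Second, and more seriously, capping $C$ by a disk $\Sigma$ with interior disjoint from the remaining circles of $N$ requires $C$ to be nullhomotopic in the complement of those circles inside the ball, which is much stronger than vanishing linking numbers, and I see no way to extract it from (5). The paper avoids this entirely: by building the disk from arcs whose interiors are kept off $c$ at every stage (that is the whole content of the $g$-equivalence machinery and the final collapse inside $T$), interior-disjointness from $c$ is enforced by construction rather than recovered after the fact. Unless you can supply the Morse-theoretic analysis you allude to, these two steps remain open, and the second in particular looks hard to close by a general-position argument alone.
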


Assumption (6) means that $g_t$ is transverse to $c$ with the exception of a finite number of times $t$ at which a birth or death of a pair of points of $g_t^{-1}(c)$ occurs, and that $g_t $  is transverse to $\partial T$ at these times. Additionally, $g_t$ is transverse to $\partial T$ except for a finite number of times at which $g_t^{-1}(\partial T)$ consists of finitely many simple closed curves and a single component that is either a wedge of finitely many circles (at a general saddle type singularity) or a point (at a birth or death singularity).

\begin{proof}[Proof of Proposition \ref{2dotted}]

To show that $c$ is unknotted, we will form a spanning disk $E$ for $c$ that is swept out by a continuous family of arcs in $\RR^3$ with endpoints on $c$ and with interiors disjoint from $c$. These arcs are of two types: 
The first type will lie on $g_t(D)$ and vary continuously with $t$
except at finitely many times $t$ when it jumps from one arc on $g_t(D)$ to another;  
the second type will interpolate continuously between the arcs just before and just after these jumps.

For a map $g:D\rightarrow\RR^3$ we call a point of $g^{-1}(c) \subset D $ a \emph{dot} and a component of $g^{-1}(T)$ that contains at least one point of $g^{-1}(c)$ a \emph{dotted component}.
 Thus $g_0^{-1}(T)$ contains two dotted components, each with a single dot.  A birth or death changes the number of points of $g_t^{-1}(c)$ in a component of $g_t^{-1}(T)$ by two, each intersecting with opposite orientation. 
Thus as long as there are two dotted components of  $g_t^{-1}(T)$  the number of dots in each remains algebraically $\pm 1$.

At time $t=0$, the pre-image $g_0^{-1}(c) \subset D$ consists of a pair of points, one in each dotted component.
Let $\alpha_0$ be an arc joining  these two points in $D$, with interior disjoint from $g_0^{-1}(c) \subset D$.
As $t$ increases, we take $\alpha_t$ to vary continuously through  arcs in $D$ joining dots in distinct dotted components with interiors disjoint from the dots. There is no obstruction to this while the collection of dots in $D$ is changing by an isotopy.  As long as there are two dotted components, there are two possible obstructions to the extension of $\alpha_t$:

\begin{enumerate}
\item  Part of $\alpha_t$ may run between two dots that come together and disappear in a death singularity.
\item One of the endpoints of $\alpha_t$ may disappear in a death singularity.

\end{enumerate}

In contrast, birth singularities do not pose a problem for the extension of the family of arcs  past the time at which they occur.

To avoid the  two problems above, we pick a small $\varepsilon >0 $ and at time $t_1' = t_1 - \varepsilon$ we jump from $\alpha_{t_1'}^- := \alpha_{t_1'}$ to a different arc $\alpha_{t_1'}^+$ that also joins points of $g_{{t_1'}}^{-1}(c) $ in distinct dotted components but that avoids a neighborhood of the death singularity. 
We will show how to construct $\alpha_{t_1'}^+$ so that this jump can be filled in appropriately for the construction of the disk $E$.
We will then extend the family of arcs $\alpha_t$ for $t > {t_1'}$ by starting with $\alpha_{t_1'}^+$ and continuing past $t_1$ until the next death singularity occurs at some time $t_2 > t_1$.

For a continuous map $g:D \rightarrow \RR^3$, we say that two arcs in the disk $D$ joining points of $g^{-1}(c)$ in distinct dotted components of  $g^{-1}(T)$ are {\em $g$-equivalent} if their images under  $g$ are homotopic  through arcs whose interiors are disjoint from $c$ and whose endpoints lie on $c$. Thus we seek to construct the arc  $\alpha_{t_1'}^+$ so that it is   $g_{t_1'}$-equivalent to $\alpha_{t_1'}^-$.

When  $g_{t}$ is transverse to $\partial T$ the dotted components of  $g_{t}^{-1}(T)$ form planar subsurfaces of $D$, each a disk with holes.
A boundary curve of a dotted component that has dots in both complementary pieces in $D$ is called \emph{primary}, and other boundary components are called {\em secondary}.

If $\alpha_{t_1'}^-$ leaves a dotted component $X$ through  a secondary boundary component $b$, it must re-enter $X$ through $b$, since $b$ is separating in $D$. 
Let $\beta$ be a sub-arc of $\alpha_{t_1'}^-$ running between two successive intersections of $\alpha_{t_1'}^-$ with $b$  and with interior outside of $X$. Then $\beta$ runs through either a dotless disc  or a dotless annulus in $D$. 
We can then homotope  $\beta$ into $b$ rel endpoints without crossing any dots. Push $\beta$ a little further into the interior of $X$.
Repeating this process we can homotope $\alpha_{t_1'}^-$ rel endpoints and without crossing dots  so that it crosses only primary boundary components. 
See Figure \ref{killnonsep}. We abuse notation somewhat and continue to refer to this arc as $\alpha_{t_1'}^-$.

\begin{figure}[h!] 
\centering
\psfrag{a}[][]{$\alpha_{t_1'}^-$}
\psfrag{b}[][]{$g_{t_1'}^{-1}(T)$}
\psfrag{c}[][]{$g_{t_1'}^{-1}(c)$}
\psfrag{d}[][]{$$}
\includegraphics[width=3.5in]{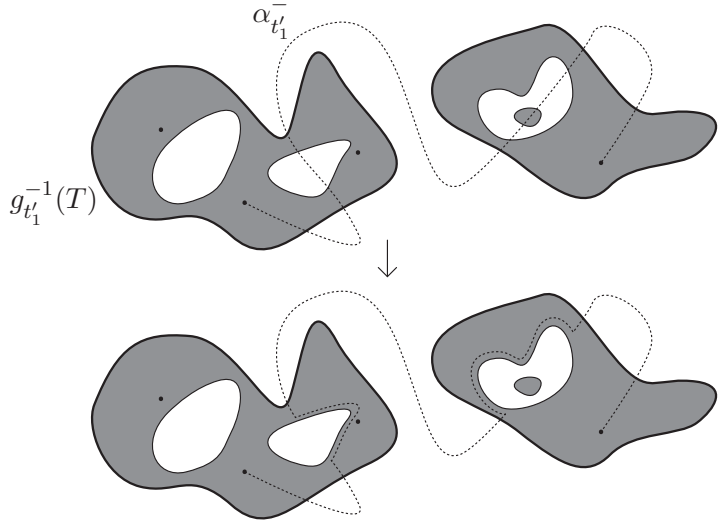} 
\caption{Removing intersections of $\alpha_{t_1'}^-$ with secondary boundary components of $g_{{t_1'}}^{-1}(T)$.}
\label{killnonsep}
\end{figure}

Our next goal is to arrange for $\alpha_{t_1'}^-$ to pass through each primary boundary component exactly once. We will achive this with the following lemma.

\begin{lemma}\label{spin}
Suppose $T$ is an embedded solid torus in $\RR^3$ with core $c$, and $g: D \rightarrow \RR^3$ is a continuous map of a disc into $\RR^3$ for which $g^{-1}(T)$ has two dotted components, each with image having algebraic intersection number with $c$  equal to $\pm1$. Let $\alpha$ be an arc in $D$ joining dots in distinct dotted components of $g^{-1}(T)$, and with interior disjoint from $g^{-1}(c)$.  
Let $\beta \subset g^{-1}(T)$ be a sub-arc arc of $\alpha$ that  starts and ends on the same primary boundary component  $b$  of $g^{-1}(T)$. Then there is an arc $\beta' \subset b$ with the same endpoints as $\beta$ and with the property that replacing $\beta$ with $\beta'$ in $\alpha$ yields an arc $\alpha'$ $g$-equivalent to $\alpha$. 

\end{lemma}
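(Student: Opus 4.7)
The plan is to define $\beta'$ as an arc on $b$ of the form ``$b_+$ followed by an appropriate number of wraps around $b$'', where $b_+$ is one of the two arcs of $b$ between the endpoints of $\beta$, and to use the primariness of $b$ to produce a decisive linking-number computation.

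First I would establish that $\mathrm{lk}(g(b),c) = \pm 1$. Since $b$ is a simple closed curve in the disk $D$, it separates $D$ into two closed disks; let $\overline{D_1}$ be the one containing the dotted component $X$ whose boundary includes $b$. Primariness of $b$ means both sides of $b$ contain dots; since the two dotted components of $g^{-1}(T)$ are $X$ and one other, the other must lie in $\overline{D_2}$, for otherwise $\overline{D_2}$ would have no dots. Hence the algebraic dot count in $\overline{D_1}$ equals that of $X$, namely $\pm 1$, and the singular disk $g(\overline{D_1}) \subset \RR^3$ bounded by $g(b)$ witnesses $\mathrm{lk}(g(b),c) = \pm 1$. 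Next, letting $p, q$ be the endpoints of $\beta$ on $b$, the simple closed curve $\beta \cup b_+$ bounds a sub-disk $\Delta_+ \subset \overline{D_1}$ with some algebraic dot count $n_+$; the singular disk $g(\Delta_+)$ then cobounds $g(\beta)$ and $g(b_+)$ in $\RR^3$ and meets $c$ algebraically $n_+$ times. I would define $\beta'$ to be the arc on $b$ that first traverses $b_+$ and then wraps around $b$ an additional $\mp n_+$ times, with the sign chosen so that each wrap contributes $-\mathrm{sign}(n_+)$ to the intersection with $c$. Boundary-summing $g(\Delta_+)$ with $|n_+|$ copies of $g(\overline{D_1})$ along $g(b)$ yields a singular disk $\Sigma \subset \RR^3$ cobounding $g(\beta)$ and $g(\beta')$, with total algebraic intersection zero with $c$.

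The final, and principal, step is to upgrade this algebraic cancellation to an actual homotopy of $g(\beta)$ to $g(\beta')$ rel endpoints through arcs with interiors disjoint from $c$. This is the hardest part, since algebraic zero intersection of $\Sigma$ with $c$ does not by itself give a disk in $\RR^3 \setminus c$. My strategy would be to pair up opposite-sign transverse intersections of $\Sigma$ with $c$ and eliminate each pair by a local Whitney-type tubing along $c$, producing a singular disk in $\RR^3 \setminus c$ whose boundary is the loop $g(\beta) \cdot g(\beta')^{-1}$. Establishing that such cancellations can always be carried out, given the potentially complicated structure of $\Sigma$, is the technical crux of the argument.
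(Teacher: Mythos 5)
Your algebraic setup (establishing $\mathrm{lk}(g(b),c) = \pm 1$ via primariness, and choosing the number of wraps of $b$ so that the resulting singular disk $\Sigma$ meets $c$ algebraically zero times) is correct and parallels the paper's observation that $g(b)$ is a meridian. However, the final step is a genuine gap, and you have correctly identified it as such: algebraic intersection number zero of $\Sigma$ with $c$ does \emph{not} yield a homotopy of $g(\beta)$ to $g(\beta')$ rel endpoints in $\RR^3 \setminus c$. The group $\pi_1(\RR^3 \setminus c)$ is a knot group --- and at this stage of the argument we do not know $c$ is unknotted, so it may have a large commutator subgroup. Linking number zero only places the loop $g(\beta)\cdot g(\beta')^{-1}$ in that commutator subgroup, not in the trivial class. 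The ``Whitney-type tubing'' you propose is not available in dimension three: cancelling a pair of oppositely-signed intersection points requires a Whitney disk embedded in the complement of $c$, which is exactly the kind of thing one cannot produce from algebraic data alone here. So the strategy, as stated, does not close.

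The paper sidesteps $\RR^3 \setminus c$ entirely by working inside the solid torus $T$, where everything is under control. Radial projection away from $c$ gives a canonical homotopy of $g(\beta)$, rel endpoints and within $T - c$, to an arc $\nu$ on $\partial T$. Separately, $\beta$ is homotoped in $D$ to an arc $\overline{\beta} \subset b$; pushing this homotopy to $\RR^3$ a priori leaves $T$ as $\beta$ slides over holes of the dotted component, but because those holes have \emph{secondary} boundary, their $g$-images are nullhomotopic on $\partial T$, so the homotopy can be arranged to stay in $T$. Thus $\nu$ and $g(\overline{\beta})$ are arcs on $\partial T$, homotopic rel endpoints in $T$, and therefore differ by a multiple of the meridian $g(b)$. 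Concatenating $\overline{\beta}$ with that multiple of $b$ gives $\beta'$, and the entire chain of homotopies lives in $T - c$, which deformation retracts to the torus $\partial T$ with abelian $\pi_1$. This is the idea your proposal is missing: work in $T - c$, not $\RR^3 \setminus c$.
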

\begin{proof}
There is a homotopy of $\beta$ in $D$  rel endpoints to an arc $\overline{\beta}$ contained in $b$, possibly crossing dots.
So $g(\beta)$ is  homotopic  rel endpoints in $\RR^3$ to an arc $g(\overline{\beta})$ in $\partial T \cap g(D)$. This homotopy may pass outside $T$, as $\beta$ slides over holes of the dotted component. However the boundaries of these holes are secondary, and therefore have image under $g$ that is homotopically trivial  on $\partial T$.  Therefore $g(\beta)$ and $g(\overline{\beta})$ are homotopic rel endpoints in $T$.
The arc $g(\beta)$ is also homotopic rel endpoints in $T-c$, by radial projection away from $c$,  to an arc $\nu$ on $\partial T$.  
See Figure \ref{badarcs}, which for clarity shows only part of $g(D)$. 

\begin{figure}[h!] 
\centering
\psfrag{a}[][]{$g(\beta)$}
\psfrag{b}[][]{$g(\overline{\beta})$}
\psfrag{c}[][]{$\nu$}
\includegraphics[width=2.5in]{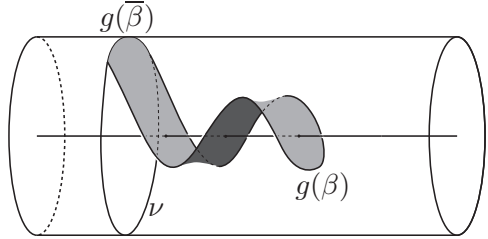} 
\caption{Two ways to homotope $g(\beta)$ onto $\partial T$.}
\label{badarcs}
\end{figure}

Now, $g(\overline{\beta})$ and $\nu$ are homotopic  rel endpoints in $T$ and so in $\partial T$ they differ by a multiple of a meridian.
Note that the curve  $g(b)$ is a meridian, since it bounds a disk in $T$ meeting $c$ algebraically once.
So $g(\beta)$ can be homotoped  rel endpoints in the complement of $c$ to  $\nu$, and then in turn to  a curve  formed by concatenating $g(\overline{\beta})$ with a multiple of  $g(b)$. Take $\beta'$ to be $\overline{\beta}$ followed by this multiple of $b$. 
\end{proof}

Now suppose that  $\beta$ is a sub-arc  of $\alpha_{t_1'}^-$ that enters and leaves a dotted component. Using Lemma \ref{spin} we can replace it with a $g_{{t_1'}}$-equivalent arc $\beta'$ 
that lies entirely on $g_{{t_1'}}^{-1}(\partial T)$, and then perturb $\beta'$ slightly so that it is disjoint from $g_{{t_1'}}^{-1}(T)$, as illustrated in Figure \ref{lemmaspin}.  
In this way we replace $\alpha_{t_1'}^-$ with a $g_{{t_1'}}$-equivalent  arc that has fewer intersections with dotted components, and
by repeating we may remove all  sub-arcs of $\alpha_{t_1'}^-$ that enter and leave a dotted component. 
We continue to refer to the resulting arc as $\alpha_{t_1'}^-$. Note that $\alpha_{t_1'}^-$ may now intersect itself.

\begin{figure}[h!] 
\centering
\psfrag{b}[][]{$\beta$}
\psfrag{c}[][]{$g_{t_1-\varepsilon}^{-1}(c)$}
\psfrag{d}[][]{$$}
\includegraphics[width=4in]{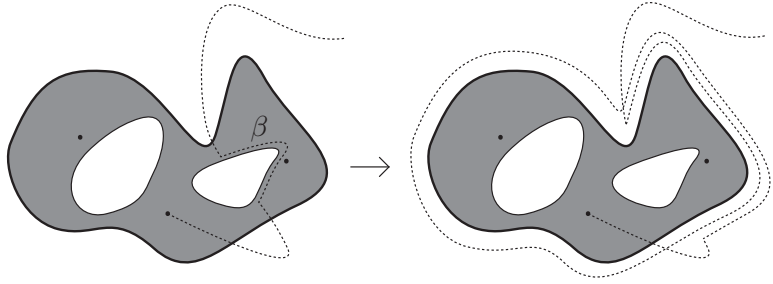} 
\caption{Removing an arc in $D$ that that starts and ends on the same primary boundary component of $g^{-1}(T)$.}
\label{lemmaspin}
\end{figure}

We have found an arc $g_{{t_1'}}$-equivalent to the original arc $\alpha_{t_1'}^-$ that starts at a dot in one dotted component, exits that dotted component, then enters the second dotted component and finally ends at a dot. 
The following lemma allows us to find a $g_{{t_1'}}$-equivalent arc that replaces a sub-arc running from a primary boundary component to a dot with any other arc running from the same point to a dot and not leaving $g_{{t_1'}}^{-1}(T)$. 

We define an arc in a solid torus $T$ with core $c$ to be {\em proper} if it has one endpoint on $\partial T$, the other endpoint on $c$, and  interior  disjoint from $c$.

\begin{lemma} \label{fixends}
Let $T$ be a solid torus with core $c$. Let $\gamma$ and $\gamma'$ be proper arcs in $T$ with $\gamma \cap \partial T= \gamma' \cap \partial T$. Then $\gamma$ and $\gamma'$ can be joined by a homotopy of proper arcs, keeping the endpoint on $\partial T$ fixed. 
\end{lemma}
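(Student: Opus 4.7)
The plan is to show that the space $\mathcal{P}$ of proper arcs in $T$ with initial endpoint at $p := \gamma(0) = \gamma'(0) \in \partial T$ is path-connected, by exhibiting an explicit deformation retraction of $\mathcal{P}$ onto a single distinguished arc.

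Identify $T \cong D^2 \times S^1$ with core $c = \{0\} \times S^1$; after a rotation of the $D^2$-factor we may take $p = (1, 0) \in \partial D^2 \times S^1$. Let $\gamma_*(s) = ((1-s),\, 0)$ be the straight radial arc in the meridian disc through $p$; it is a proper arc from $p$ to $(0,0) \in c$. It suffices to show that every proper arc $\gamma$ with $\gamma(0) = p$ admits a homotopy through proper arcs (fixing $p$) to $\gamma_*$; applying this to $\gamma$ and to $\gamma'$ and concatenating then proves the lemma.

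Write $\gamma(s) = (w(s), \phi(s))$, so properness gives $w(s) \ne 0$ for $s \in [0,1)$ and $w(1) = 0$. Set $r(s) = |w(s)|$ and choose continuous lifts $\theta \co [0,1) \to \RR$ of $\arg w$ and $\tilde\phi \co [0,1] \to \RR$ of $\phi$, each starting at $0$. Concatenate three straight-line homotopies, each parameterized by $\tau \in [0,1]$:
\[
\begin{aligned}
\text{(A)} \quad \gamma^A_\tau(s) &= \bigl(\,((1-\tau) r(s) + \tau(1-s))\,e^{i\theta(s)},\; \phi(s)\,\bigr), \\
\text{(B)} \quad \gamma^B_\tau(s) &= \bigl(\,(1-s)\,e^{i(1-\tau)\theta(s)},\; \phi(s)\,\bigr), \\
\text{(C)} \quad \gamma^C_\tau(s) &= \bigl(\,1-s,\; (1-\tau)\tilde\phi(s) \bmod 2\pi\,\bigr).
\end{aligned}
\]
In each phase one checks directly that $\gamma_\tau(0) = p$, $\gamma_\tau(1) \in c$, and that the disc coordinate has strictly positive modulus for $s \in [0,1)$, so the interior of $\gamma_\tau$ avoids $c$. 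The phases fit together because $\gamma^A_0 = \gamma$, $\gamma^A_1 = \gamma^B_0$, $\gamma^B_1 = \gamma^C_0$, and $\gamma^C_1 = \gamma_*$.

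The main technical point, and where I expect some care is needed, is continuity at $s = 1$ in phases (A) and (B): the lift $\theta$ may fail to extend continuously to $s = 1$ if $\gamma$ spirals around $c$ as it approaches. The saving observation is that in both phases the disc coordinate has the form (a factor vanishing as $s \to 1$) times a unit-modulus exponential, so its modulus is bounded by the vanishing factor and hence tends to $0$ uniformly in $\tau$ as $s \to 1$, regardless of the behavior of $\theta$. Thus each $\gamma^{(\cdot)}_\tau$ is continuous on $[0,1]^2$ and the homotopy genuinely proceeds through proper arcs, completing the construction.
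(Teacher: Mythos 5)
Your construction is correct, and it takes a genuinely different route from the paper's. The paper lifts $\gamma$ and $\gamma'$ to the universal cover $(\text{disk}) \times \RR$, slides each lift along the $\RR$-factor into the single slice $(\text{disk}) \times \{\text{pt}\}$ containing the lift of $p$, and then appeals to the (asserted but not spelled out) fact that two arcs in a disk from a fixed boundary point to the center, with interiors avoiding the center, are homotopic through such arcs; projecting back finishes the argument. Your approach identifies $T \cong D^2 \times S^1$ and builds an explicit three-phase straight-line deformation retraction onto a canonical radial arc $\gamma_*$: first normalize the radial coordinate to $1-s$, then unwind the $D^2$-argument via a continuous lift $\theta$ on $[0,1)$, then unwind the $S^1$-coordinate via a lift $\tilde\phi$ on $[0,1]$. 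Phase (C) plays the role of the paper's slide along the $\RR$-factor, while phases (A)--(B) make explicit the within-slice homotopy that the paper leaves implicit. You also correctly flag and handle the one delicate point: $\theta$ need not extend to $s=1$, but joint continuity at $s=1$ is salvaged because the radial factor vanishes there uniformly in $\tau$, and containment in $T$ holds since the interpolated modulus is a convex combination of quantities bounded by $1$. The paper's proof is shorter and more conceptual; yours is longer but fully constructive and fills in a detail the paper glosses over.
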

\begin{proof}
We can lift $\gamma$ and $\gamma'$ to the universal cover of $T$, which is homeomorphic to  $\textrm{(disk)}\times \RR$,  so that their common endpoint on $\partial T$ lifts to the same point $x$, and $c$ lifts to $\{0\} \times \RR$.  Each lift can be homotoped, keeping $x$ fixed and moving points only along the $\RR$-factor, to the slice $\textrm{(disk)}\times \textrm{\{point\}}$ containing $x$.  Further, the resulting arcs are homotopic rel endpoints in $\textrm{(disk)}\times \textrm{\{point\}}$ via arcs that miss $\{0\} \times \textrm{\{point\}}$ in their interior. Therefore the lifts of $\gamma$ and $\gamma'$ are homotopic through arcs joining $x$ to $\{0\} \times \RR$ and with interiors disjoint from $\{0\} \times \RR$.  The projection of this homotopy to $T$  gives a homotopy joining $\gamma$ and $\gamma'$ through proper arcs in $T$. 
\end{proof}
 
Now suppose  that a death singularity takes place in the dotted component containing the initial segment of $\alpha_{t_1'}^-$. Let $\gamma_1$ be the initial segment of $\alpha_{t_1'}^-$, running from a dot to the primary boundary component. Choose an arc $\gamma_1'$ in the same dotted component that runs from a dot to the point where  $\gamma_1$ exits the dotted component, so that $\gamma_1'$ is disjoint from a neighborhood containing the two dying dots. This is possible because the number of dots in each dotted component is odd.  By Lemma \ref{fixends}, $\alpha_{t_1'}^-$ is $g_{{t_1'}}$-equivalent to the arc formed by replacing $\gamma_1$ by $\gamma_1'$.  If the death singularity takes place in the other dotted component then a similar change may be made.  We take $\alpha_{t_1'}^+$ to be the arc, $g_{{t_1'}}$-equivalent to $\alpha_{t_1'}^-$, which is obtained from $\alpha_{t_1'}^-$ after making all these changes. See Figure \ref{ends}.

 \begin{figure}[h!] 
\centering

\psfrag{a}[][]{$\gamma_1$}
\psfrag{b}[][]{$\gamma_1'$}
\psfrag{d}[][]{$$}
\includegraphics[width=4in]{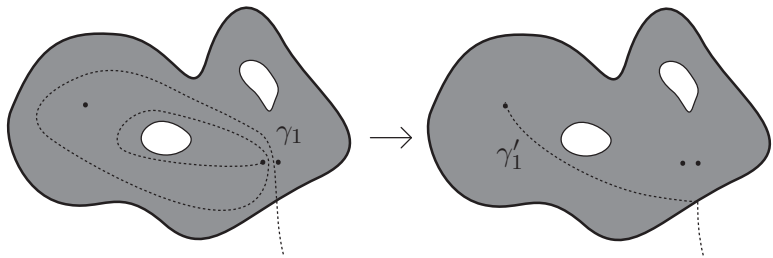} 
\caption{Changing $\alpha_{t_1'}^-$ so that it avoids  a death singularity.}
\label{ends}
\end{figure}

We have constructed a  family of arcs $\alpha_t$ in $D$ that varies continuously until time ${t_1'}= t_1-\varepsilon$. It then jumps from $\alpha_{t_1'}^-$ to the $g_{{t_1'}}$-equivalent 
 arc $\alpha_{t_1'}^+$.
 The arc $\alpha_{t_1'}^+$ was chosen to avoid a neighborhood of the death singularity at time $t_1$ so we can extend the family of arcs $\alpha_t$ past time $t=t_1$ until just before the next death singularity at time $t=t_2$. We then repeat this process. 

Eventually, at time $t=t_l\in(0,1)$ the two dotted components merge along ${g_{t_l}}^{-1}(\partial T)$. At this time ${g_{t_l}}^{-1} (\partial T)$ 
consists of a collection of finitely many simple closed curves and one wedge of finitely many circles embedded in $D$.
The arc $\alpha_{t_l}$  begins and ends at a dot, and may run in and out of the single dotted component.  

We now look at the complementary components in $D$ of the single dotted component of ${g_{t_l}}^{-1}(T)$.  Each of these is either a disk or an annulus with $\partial D$ as one boundary component. Any subarc $\beta$ of $\alpha_{t_l}$ that runs out of a dotted component  into a complementary component $X$ eventually leaves $X$ and reenters the dotted component.  
Since $X$ is either a disk or an  annulus with $\partial D$ as one boundary component, we can homotope $\beta$ rel endpoints off $X$ and into the dotted component without passing through any dots, since all dots lie within the single dotted component.
In this way we can homotope $\alpha_{t_l}$ so that it lies entirely within the dotted component of ${g_{t_l}}^{-1}(T)$.
This means that $ g_{t_l} (\alpha_{t_l})$ now lies entirely within $T$.  It is then straightforward to shrink  $ g_{t_l} (\alpha_{t_l})$ within $T$, keeping its interior disjoint from $c$ and its endpoints on $c$, until it collapses to a point on $c$. 
 
We now form a  spanning disk $E$ for $c$. Let $\beta_t = g_t \circ \alpha_t, ~t\in [0, t_l]$. Then $\beta_t$ is a family of arcs in $\RR^3$ whose endpoints lie on $c$ and whose interiors are disjoint from $c$. These arcs vary continuously except at finitely many times $t_1', t_2',   \dots, t_n'$ just before death singularities.  At these times the limiting arcs $\alpha_{t_i'}^-$ and $\alpha_{t_i'}^+$  as $t$ approaches $t_i'$ from below and above are $g_{t_i'}$-equivalent. 
Finally, $ \alpha_{t_l}$ is  homotopic to a point on $c$ via arcs that start and end on $c$ but have interiors disjoint from $c$. Therefore there is a family of arcs with endpoints on $c$ sweeping out a disk with interior in the complement of $c$ that represents a homotopy of 
 $\beta_0$ to a point in $c$.
Let $\overline{E}$ denote the union of these arcs in $\RR^3$ and  take $E$ to be the disk obtained by taking the union of $\overline{E}$ with its reflection  in the $xy$-plane. Note that the interior of $E$ does not intersect $c$ and that $\partial E \subset c$.

Let $a$ be  one of the two points of intersection of $ c$ with the $g_0 (D)$. Then $ \partial E$ intersects $a$ in an odd number of points,
one coming from  $ \beta_0 $ 
and an additional even number coming from equal numbers of intersections  of $a$ with $\partial \overline{E}$  and its reflection. 
So $ \partial E$ is non-trivial in $\pi_1{(c)}$. By the Loop Theorem \cite{Papakyriakopoulos}, $c$  is the boundary of an embedded disc and therefore unknotted.
\end{proof}

\section{Proof of Theorem~\ref{mainspecific}} \label{bigproof}

We now prove Theorem~\ref{mainspecific}, showing that if $L$ can be split via a physical isotopy then the length of $L_1$ must be at least $4\pi + 6$.

\begin{proof}[Proof of Theorem~\ref{mainspecific}]

Assume there is a physical isotopy $I_s$, $s \in [0,1]$, of $\RR^3$  with $I_0$ the identity, $ I_1$  taking $L_1$ and $L_2$ to opposite sides of a plane,  and with the length of the unknotted component $L_1$  being less than $4\pi + 6$. We will derive a contradiction.

During the course of the isotopy $I_s$ it is possible that the radius one solid torus neighborhoods of the two link components bump against themselves or each other.  
We describe a  slight modification of the isotopy that  keeps the two components embedded and disjoint.
Throughout the isotopy a neighborhood of radius $r<1$ describes two embedded disjoint solid torus neighborhoods of each of  $L_1 $ and $L_2$.
Take $r=1-\varepsilon'$ to be slightly less than one and then rescale the entire isotopy $I_s$ by $1/(1-\varepsilon')$.  
This restores the radius to 1 at the cost of slightly lengthening $L_1$ and $L_2$.  With $\varepsilon' $ small, the length of $L_1$ remains below $4\pi + 6$. The rescaled physical isotopy is then $(1+\varepsilon$)-thick, with $\varepsilon = \varepsilon'/(1-\varepsilon')$.

For a curve $c$ in $\RR^3$, let $T(c)$ denote the radius-1 neighborhood of $c$. Without loss of generality take $I_s$ so that $I_s(T(L_i)) = T(I_s(L_i))$ for $i=1,2$, so that $I_s$ respects radius 1 neighborhoods of $L_1$ and $L_2$. Let $T_s$ be the solid torus $T(I_s(L_2))$. For each $s \in [0,1]$ let $x_s$ be the center of mass of the  embedded  curve $I_s(L_1)$ 
and let $f_s: D \to \RR^3$ parametrize the disk forming the cone over  $I_s(L_1)$ with cone point $x_s$. Since the cone point is inside the convex hull, its cone angle is always  at least $2\pi$ \cite{Gage, Gromov}. 
Moreover each disc is flat  except  at the cone point and therefore a subdisk of a complete  CAT(0) surface obtained by extending the rays from the cone point to infinity.

Now perturb $f_s$, $s \in[0,1]$, so that the family of maps $f_s$ is generic, but keeping $f_s$ fixed for $s$ in a small neighborhood of $0$ and fixed on ${\partial D}$ for all $s$.
By generic, we mean that:
\begin{enumerate}
\item $f_s$ is transverse to $c$ except for a finite number of times $s$ at which a birth or death of a pair of points of $f_s^{-1}(c)$ occurs;
\item$f_s$  is transverse to $\partial T$ at these times; and
\item $f_s$ is transverse to $\partial T$ except for a finite number of times when $f_s^{-1}(\partial T)$ consists of finitely many simple closed curves and a single component that is a wedge of finitely many circles (in the case of a saddle type singularity) or a single point (in the case of a birth or death singularity).
\end{enumerate}
Genericity can be achieved by approximating the appropriate parts of $f_s$ by PL maps and using general position. The perturbation can be made arbitrarily $\mathcal{C}^0$-small, and we let $f_s'$ denote the result of perturbing $f_s$ in this manner. 

Each component of $f_s'^{-1}(T_s)$ in $D$ that contains a point of  $f_s'^{-1}(I_s(L_2))$ contains a disc of radius one in $D$ enclosing that point. The distance in $D$ of $\partial D$  from each of these components is at least one. Suppose for a contradiction that  there are three components of $f_s'^{-1}(T_s)$  containing a point of $f_s'^{-1}(I_s(L_2))$ for some $s$ and furthermore suppose that this is true no matter how small we made the perturbation of $f_s$ that gave $f_s'$. Then $f_s^{-1}(T_s)$  contains three disks of radius 1 with disjoint interiors, and with $\partial D$ having distance at least one from each disk. This contradicts Proposition \ref{iso}, since $L_1$ has length less than $4\pi + 6$. Hence, by taking the perturbation to obtain $f_s'$ to be sufficiently small,  we can arrange that  for all $s$ there are at most two components of $f_s'^{-1}(T_s)$  containing a point of $f_s'^{-1}(I_s(L_2))$.

Define a family of disks  $h_s : D \to  \RR^3$, $0 \le s \le 1$, by setting $h_s = I_s^{-1} \circ f_s'$. Extend $h_s$ to $-1 \le s \le 0$ by  reflecting through the $xy$-plane, setting $h_{s} = r \circ h_{-s} $ for $s<0$. Each  $h_s$ maps $\partial D$ to $L_1$ and for $s\in[-1,1]$ the pre-image  $h_s^{-1} (T(L_2))$ has at most two components containing a point of $h_s^{-1}(L_2) $. Moreover  $h_0(D)$  lies in the $xy$-plane and $h_1(D)$ and   $h_{-1}(D)$ are disjoint from $L_2$. The disks $h_s(D)$ now satisfy all the conditions of Proposition \ref{2dotted}, implying that $L_2$ is unknotted. This contradiction proves Theorem \ref{mainspecific}. \end{proof}

\bibliographystyle{amsplain}
\bibliography{gordrefs}

\providecommand{\bysame}{\leavevmode\hbox to3em{\hrulefill}\thinspace}
\providecommand{\MR}{\relax\ifhmode\unskip\space\fi MR }
\providecommand{\MRhref}[2]{%
  \href{http://www.ams.org/mathscinet-getitem?mr=#1}{#2}
}
\providecommand{\href}[2]{#2}
\begin{thebibliography}{10}

\bibitem{BS}
Gregory Buck and Jonathan Simon, \emph{Energy and length of knots}, Lectures at
  {KNOTS} '96 ({T}okyo), Ser. Knots Everything, vol.~15, World Sci. Publ.,
  River Edge, NJ, 1997, pp.~219--234. \MR{1474523 (98f:58054)}

\bibitem{CKS}
J~Cantarella, R~Kusner, and J~Sullivan, \emph{Tight knot values deviate from
  linear relation}, Nature \textbf{392} (1998), 237.

\bibitem{CKS2}
Jason Cantarella, Robert~B. Kusner, and John~M. Sullivan, \emph{On the minimum
  ropelength of knots and links}, Invent. Math. \textbf{150} (2002), no.~2,
  257--286.

\bibitem{DEJ}
Y.~Diao, C.~Ernst, and E.~J. Janse~van Rensburg, \emph{Thicknesses of knots},
  Math. Proc. Cambridge Philos. Soc. \textbf{126} (1999), no.~2, 293--310.

\bibitem{FHW}
Michael~H. Freedman, Zheng-Xu He, and Zhenghan Wang, \emph{M\"obius energy of
  knots and unknots}, Ann. of Math. (2) \textbf{139} (1994), no.~1, 1--50.

\bibitem{Gage}
Michael~E. Gage, \emph{A proof of {G}ehring's linked spheres conjecture}, Duke
  Math. J. \textbf{47} (1980), no.~3, 615--620.

\bibitem{GM}
Oscar Gonzalez and John~H. Maddocks, \emph{Global curvature, thickness, and the
  ideal shapes of knots}, Proc. Natl. Acad. Sci. USA \textbf{96} (1999), no.~9,
  4769--4773 (electronic).

\bibitem{Gromov}
Mikhael Gromov, \emph{Filling {R}iemannian manifolds}, J. Differential Geom.
  \textbf{18} (1983), no.~1, 1--147.

\bibitem{P2}
Vsevolod Katritch, Wilma~K. Olson, Piotr Piera{\'n}ski, Jacques Dubochet, and
  Andrzej Stasiak, \emph{Properties of ideal composite knots}, Nature
  \textbf{388} (1997), 148--151.

\bibitem{Papakyriakopoulos}
C.~D. Papakyriakopoulos, \emph{On {D}ehn's lemma and the asphericity of knots},
  Ann. of Math. (2) \textbf{66} (1957), 1--26.

\bibitem{P}
Piotr Piera{\'n}ski, \emph{In search of ideal knots}, Ideal knots, Ser. Knots
  Everything, vol.~19, World Sci. Publ., River Edge, NJ, 1998, pp.~20--41.

\end{thebibliography}
 
\end{document}